\documentclass[11pt]{amsart}

\usepackage{epigamath}


\usepackage[english]{babel}


\numberwithin{equation}{section}


\usepackage{amsmath, amsthm, amssymb,amscd}
\usepackage{float}
\usepackage{graphicx}
\usepackage{xypic}
\usepackage[arrow, matrix, curve]{xy}
\usepackage{tikz-cd}


\newtheorem{thm}{Theorem}[section]
\newtheorem{lem}[thm]{Lemma}
\newtheorem{prop}[thm]{Proposition}


\newcommand{\seq}{\subseteq}

\def\PP{{\mathbf P}}

\DeclareMathOperator{\Pic}{Pic}
\DeclareMathOperator{\Proj}{Proj}
\DeclareMathOperator{\id}{id}
\DeclareMathOperator{\Coker}{Coker}
\DeclareMathOperator{\Sym}{Sym}

\newcommand{\lra}{\longrightarrow}

\newcommand{\longtwoheadrightarrow}{\relbar\joinrel\twoheadrightarrow}
\newcommand{\longhookrightarrow}{\lhook\joinrel\longrightarrow}


\EpigaVolumeYear{9}{2025} \EpigaArticleNr{27} \ReceivedOn{March 6, 2025}
\InFinalFormOn{June 24, 2025}
\AcceptedOn{July 28, 2025}

\title{A proof of generic Green's conjecture in odd genus}
\titlemark{A proof of generic Green's conjecture in odd genus}

\author{Michael Kemeny}
\address{University of Wisconsin-Madison, Department of Mathematics, 480 Lincoln Drive, Madison, WI 53706, USA} 
\email{michael.kemeny@gmail.com}

\authormark{M.~Kemeny}

\AbstractInEnglish{In this note, we give a new proof of Voisin's
  theorem on canonical syzygies for generic curves of odd genus.}

\MSCclass{14H99}

\KeyWords{Curves, syzygies, Green's conjecture}


\acknowledgement{The author was supported by NSF grant DMS-2100782.}

\begin{document}



\maketitle

\begin{prelims}

\DisplayAbstractInEnglish

\bigskip

\DisplayKeyWords

\medskip

\DisplayMSCclass







\end{prelims}


\newpage

\setcounter{tocdepth}{1}

\tableofcontents


\section{Introduction}

Our goal is to give a proof of Voisin's theorem in \cite{voisin-thm} resembling \cite[Sections~1 and~2]{kemeny-voisin}. Green's conjecture on the syzygies of a canonical curve, see \cite{green-koszul},  is one of the central conjectures on the syzygies of projective varieties. It was first proven for a generic curve by Voisin. This proof, using the geometry of $K3$ surfaces intimately, is a landmark result; however, it is somewhat long and complicated. Alternate proofs have been offered recently, including an approach using the tangent developable, see \cite{AFPRW}, which is very different to the approach we take here. Our approach here is rather formal and only uses the geometry of $K3$ surfaces for the first few steps to set up the problem, leading one to hope that it generalizes to new situations.

\subsection*{Acknowledgments}
I thank the referee for a careful reading.

\section{The proof}
The main idea is that while, in the notation below, the local complete intersection scheme $\mathcal{Z}$  is not finite over $\PP$,  $\widetilde{\mathcal{Z}}$ is finite over $\PP_2$. 

Let $X$ denote a $K3$ surface with $\Pic(X)$ generated by a big and nef line bundle $L'$ with $(L')^2=4k+2$ together with a smooth rational curve $\Delta$ with $(L' \cdot \Delta)=0$ for $k\geq 4$; so we need a different $K3$ surface for each integer $k$.\footnote{There is a typo in \cite[Section~3]{kemeny-voisin}, where it is written $(L')^2=2k+2$.}  Set $L:=L'(-\Delta)$.  Then $L$ and $L'$ are base-point-free, with $\mathrm{H}^1(X,L)=\mathrm{H}^1(X,L')=0$; see \cite[Lemma 6]{kemeny-voisin}. 
We use the \emph{kernel bundle} approach, see \cite[Section~3]{ein-lazarsfeld-asymptotic}, and so need to show
$\mathrm{H}^1(\bigwedge^k M_L(L))=0$. We have the exact sequence
$$
0 \lra \bigwedge^{k+1}M_L (L') \lra \bigwedge^{k+1} M_{L'}(L') \lra \bigwedge^k M_L(L) \lra 0,
$$
where $M_L$, $M_{L'}$ are the kernel bundles in the notation of \cite[Section~2.1]{aprodu-nagel}. Duality gives $\mathrm{H}^2 (\bigwedge^{k+1}M_L (L'))\simeq \mathrm{H}^0 (\bigwedge^{k}M_L (-\Delta))\seq \bigwedge^k \mathrm{H}^0(L) \otimes \mathrm{H}^0(\mathcal{O}(-\Delta))=0$. So $\mathrm{H}^1(\bigwedge^k M_L(L))=0$ if the map $\mathrm{H}^1 (\bigwedge^{k+1}M_L (L')) \to  \mathrm{H}^1 (\bigwedge^{k+1}M_{L'} (L'))$,
induced from $M_L \seq M_{L'}$, is surjective.

 Contract $\Delta$ via a map $\mu\colon X \to \hat{X}$, where $\hat{X}$ is a nodal $K3$ surface. We have a line bundle $\hat{L}$ on $\hat{X}$ with $\mu^* \hat{L} \simeq L'$ and the rank two bundle $\hat{E}$ on $\hat{X}$, with $h^0(\hat{X}, \hat{E})=k+3$ and $\det \hat{E}=\hat{L}$, defined as the Lazarsfeld--Mukai bundle induced by a general $g^1_{k+2}$ on $C \in |\hat{L}|$. Set $E$ to be the vector bundle $\mu^*\hat{E}$ on~$X$.  Consider $E(-\Delta)$, a globally generated bundle with $h^0(X, E(-\Delta))=k+1$; see \cite[Section~3]{kemeny-voisin}. Then $\mathrm{H}^0(X,E)\simeq \mathrm{H}^0(\hat{X},\hat{E})$.  Set $\PP:=\PP(\mathrm{H}^0(X,E)):=\Proj(\mathrm{H}^0(X,E)^{\vee})
 \simeq \PP^{k+2}$. 
Let $p\colon X\times \PP \to X$ and $q\colon X\times \PP \to \PP$ denote the projections. Let $\mathcal{Z}\seq X \times \PP$ be the locus 
$\{ (x,s)\mid s(x)=0 \} \seq X \times \PP$, which is a projective bundle over $X$, and thus irreducible. We have $\dim \mathcal{Z}=k+2$, and $q_{|_{\mathcal{Z}}}$ is generically finite but has one-dimensional fibres over $\PP(\mathrm{H}^0(X,E(-\Delta))$. Further, $\mathcal{Z}$ has codimension two in $X \times \PP$, and we have an exact sequence
$$
0 \lra \mathcal{O}_X \boxtimes \mathcal{O}_{\PP}(-2) \overset{\id}\lra E \boxtimes \mathcal{O}_{\PP}(-1) \lra p^*L'\otimes I_{\mathcal{Z}} \lra 0,
$$
which is a Koszul complex, where $\id$ is given by multiplication by $\id$ in
$$
\mathrm{H}^0(E \boxtimes \mathcal{O}_{\PP}(1)) \simeq \mathrm{H}^0(E) \otimes  \mathrm{H}^0(E)^{\vee}.
$$
Set $\PP_2:=\PP(\mathrm{H}^0(\hat{X},\hat{E}))$. Then $\mu_*E=\mu_*\mu^*\hat{E} \simeq \hat{E}$, and we have an isomorphism $i \colon \PP \to \PP_2$.
Let $\hat{\mathcal{Z}} \seq \hat{X} \times \PP_2$ be the codimension two locus $\{ (x,s) \mid s(x)=0 \} \seq \hat{X} \times \PP(\mathrm{H}^0(\hat{X},\hat{E}))$,  which is an lci in a Cohen--Macaulay scheme and hence a Cohen--Macaulay scheme; see \cite[p.\ 112]{matsumura}.

 We have an exact sequence 
$0 \to \mathcal{O}_{\hat{X}} \boxtimes \mathcal{O}_{\PP_2}(-2) \overset{\id}\lra \hat{E} \boxtimes \mathcal{O}_{\PP_2}(-1) \to {\hat{p}}^*{\hat{L}}\otimes I_{\hat{\mathcal{Z}}} \to 0$,
 where $\hat{p} \colon \hat{X} \times \PP_2 \to \hat{X}$ and $\hat{q} \colon \hat{X} \times \PP_2 \to \PP_2$ denote the projections. We let
 $$
 \tau:=\mu \times i \colon X \times \PP \lra \hat{X} \times \PP_2.
 $$
 Let $v$ be the node of $\hat{X}$.

\begin{lem} \label{Z-lemma}
We have $\tau^* I_{\hat{\mathcal{Z}}}\simeq I_{\mathcal{Z}}$, $\tau_* I_{\mathcal{Z}}\simeq I_{\hat{\mathcal{Z}}}$ and $\tau_*(I_{\mathcal{Z}}\otimes p^*\mathcal{O}_X(-\Delta)) \simeq I_{\hat{{\mathcal{Z}}}} \otimes \hat{p}^*I_v$.
\end{lem}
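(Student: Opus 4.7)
The plan is to deduce all three isomorphisms from the two Koszul short exact sequences already exhibited, by applying pullback and pushforward along $\tau$ and using the projection formula throughout. The necessary inputs on $\mu$ are $\mu_{*}\OO_{X} = \OO_{\hat{X}}$ with $R^{i}\mu_{*}\OO_{X} = 0$ for $i \geq 1$ (since $v$ is a rational double point), together with $\mu_{*}\OO_{X}(-\Delta) = I_{v}$ and $R^{i}\mu_{*}\OO_{X}(-\Delta) = 0$ for $i \geq 1$; these latter facts follow by applying $R\mu_{*}$ to $0 \to \OO_{X}(-\Delta) \to \OO_{X} \to \OO_{\Delta} \to 0$ and using $\mathrm{H}^{1}(\Delta,\OO_{\Delta}) = 0$.

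For $\tau^{*}I_{\hat{\cZ}} \simeq I_{\cZ}$, I would pull the Koszul sequence on $\hat{X} \times \PP_{2}$ back along $\tau$. Since $\tau^{*}(\hat{E} \boxtimes \OO_{\PP_{2}}(-1)) = E \boxtimes \OO_{\PP}(-1)$ (using $\mu^{*}\hat{E} = E$) and $\tau^{*}\hat{p}^{*}\hat{L} = p^{*}L'$, right exactness of $\tau^{*}$ matches the pulled-back sequence with the Koszul sequence on $X \times \PP$, giving $p^{*}L' \otimes \tau^{*}I_{\hat{\cZ}} \simeq p^{*}L' \otimes I_{\cZ}$; cancelling the line bundle $p^{*}L'$ yields the claim. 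Dually, for $\tau_{*}I_{\cZ} \simeq I_{\hat{\cZ}}$, I apply $R\tau_{*}$ to the Koszul sequence on $X \times \PP$. Projection formula for the locally free bundle $\hat{E}$ together with the vanishings above causes $R^{i}\tau_{*}$ to kill the two leftmost terms for $i \geq 1$, reducing them to the two leftmost terms of the Koszul sequence on $\hat{X} \times \PP_{2}$. The resulting short exact sequence identifies $\tau_{*}(p^{*}L' \otimes I_{\cZ})$ with $\hat{p}^{*}\hat{L} \otimes I_{\hat{\cZ}}$, and projection formula applied to $p^{*}L' = \tau^{*}\hat{p}^{*}\hat{L}$ then moves $\hat{L}$ outside and allows cancellation.

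For the third isomorphism, I would twist the Koszul sequence on $X \times \PP$ by the line bundle $p^{*}\OO_{X}(-\Delta)$ and apply $R\tau_{*}$. Using $\mu_{*}\OO_{X}(-\Delta) = I_{v}$ together with projection formula for $\hat{E}$ and the vanishing $R^{1}\mu_{*}\OO_{X}(-\Delta) = 0$, the pushed-forward sequence becomes
$$
0 \to I_{v} \boxtimes \OO_{\PP_{2}}(-2) \to (\hat{E} \otimes I_{v}) \boxtimes \OO_{\PP_{2}}(-1) \to \hat{p}^{*}\hat{L} \otimes \tau_{*}\bigl(I_{\cZ} \otimes p^{*}\OO_{X}(-\Delta)\bigr) \to 0.
$$
On the other hand, tensoring the Koszul sequence on $\hat{X} \times \PP_{2}$ by $\hat{p}^{*}I_{v}$ produces a sequence with the same first two terms and cokernel $\hat{p}^{*}\hat{L} \otimes (I_{\hat{\cZ}} \otimes \hat{p}^{*}I_{v})$. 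The main subtle point is to verify that this tensored sequence is still exact: the Koszul sequence is a $2$-term locally free resolution of $\hat{p}^{*}\hat{L} \otimes I_{\hat{\cZ}}$, so that sheaf has Tor-dimension at most one, and dimension shifting through $0 \to \hat{p}^{*}I_{v} \to \OO \to \hat{p}^{*}\OO_{v} \to 0$ identifies $\mathrm{Tor}_{1}(\hat{p}^{*}\hat{L} \otimes I_{\hat{\cZ}},\,\hat{p}^{*}I_{v})$ with $\mathrm{Tor}_{2}(\hat{p}^{*}\hat{L} \otimes I_{\hat{\cZ}},\,\hat{p}^{*}\OO_{v}) = 0$. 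Comparing cokernels of the two sequences and cancelling the line bundle $\hat{p}^{*}\hat{L}$ then delivers the final isomorphism. The Tor vanishing is the main obstacle; once it is in hand, the rest is formal bookkeeping to check that the Koszul maps match compatibly under pullback and pushforward.
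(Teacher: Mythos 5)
Your proof is correct and follows essentially the same route as the paper: push and pull the two Koszul resolutions along $\tau$, using $\mu_*\mathcal{O}_X\simeq \mathcal{O}_{\hat{X}}$, $R^{>0}\mu_*\mathcal{O}_X=0$, $\mu_*\mathcal{O}_X(-\Delta)\simeq I_v$ and the projection formula. The only point where you add substance beyond the paper's very terse argument is the Tor-vanishing check that tensoring the Koszul resolution of $\hat{p}^*\hat{L}\otimes I_{\hat{\mathcal{Z}}}$ by $\hat{p}^*I_v$ remains exact; this is a valid (and welcome) replacement for the paper's brief appeal to viewing $\tau$ as the blow-up of $\hat{X}\times \PP_2$ along $v\times \PP_2$.
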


\begin{proof}
We have the two exact sequences
\begin{align*}
0 &\lra \mathcal{O}_X \boxtimes \mathcal{O}_{\PP}(-2) \overset{\id}\lra E \boxtimes \mathcal{O}_{\PP}(-1) \lra p^*L'\otimes I_{\mathcal{Z}} \lra 0 \quad\text{and} \\
0 &\lra \mathcal{O}_{\hat{X}} \boxtimes \mathcal{O}_{\PP_2}(-2) \overset{\id}\lra \hat{E} \boxtimes \mathcal{O}_{\PP_2}(-1) \lra {\hat{p}}^*{\hat{L}}\otimes I_{\hat{\mathcal{Z}}} \lra 0.
\end{align*}
Now $\mu_* \mathcal{O}_{X}\simeq \mathcal{O}_{\hat{X}}$, $R^i \mu_* \mathcal{O}_{X}\simeq \mathcal{O}_{\hat{X}}=0$ for $i>0$, $\mu^*\hat{E} \simeq E$ and $\mu_*E \simeq \hat{E}$. So the first two claims follow by applying $\tau^*$ or $\tau_*$ to the two exact sequences. For the third statement, we have the exact sequence
$$
0 \lra \mathcal{O}_X(-\Delta) \boxtimes \mathcal{O}_{\PP}(-2) \overset{\id}\lra E(-\Delta)  \boxtimes \mathcal{O}_{\PP}(-1) \lra p^*L'(-\Delta) \otimes I_{\mathcal{Z}} \lra 0.
$$
We may view $\tau$ as the blow-up of $\hat{X} \times \PP_2$ in the singularity $v \times \PP_2$. The claim then follows by applying $\tau_*$.
\end{proof}

The following lemma is important for us.

\begin{lem} \label{surj-lem}
The sheaves $q^*q_*(p^*L \otimes I_{\mathcal{Z}})$ and $q^*q_*(p^*L' \otimes I_{\mathcal{Z}})$ are locally free on $X \times \PP$. Further, the natural morphisms $q^*q_*(p^*L \otimes I_{\mathcal{Z}}) \to p^*L \otimes I_{\mathcal{Z}}$ and $q^*q_*(p^*L' \otimes I_{\mathcal{Z}}) \to p^*L' \otimes I_{\mathcal{Z}}$ are surjective.
\end{lem}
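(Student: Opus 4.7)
The plan is to compute $q_{*}(p^{*}L'\otimes I_{\mathcal{Z}})$ and $q_{*}(p^{*}L\otimes I_{\mathcal{Z}})$ on $\PP$ directly by pushing the two Koszul-type sequences of Lemma~\ref{Z-lemma} forward along $q$, and then to deduce the surjectivity assertions from the global generation of $E$ and $E(-\Delta)$ via a short diagram chase. For $L'$, applying $Rq_{*}$ to the first Koszul sequence and using $H^{0}(X,\OO_{X})=\C$ together with $H^{1}(X,\OO_{X})=0$, the K\"unneth formula gives $q_{*}(\OO_{X}\boxtimes\OO_{\PP}(-2))=\OO_{\PP}(-2)$ and $R^{1}q_{*}(\OO_{X}\boxtimes\OO_{\PP}(-2))=0$. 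The direct image long exact sequence then collapses to
$$
0\to \OO_{\PP}(-2)\to H^{0}(X,E)\otimes\OO_{\PP}(-1)\to q_{*}(p^{*}L'\otimes I_{\mathcal{Z}})\to 0.
$$
Since the Koszul map corresponds to $\id\in H^{0}(X,E)\otimes H^{0}(X,E)^{\vee}$, its pushforward is the $\OO_{\PP}(-1)$-twist of the tautological subbundle inclusion on $\PP=\PP(H^{0}(X,E))$; in particular $q_{*}(p^{*}L'\otimes I_{\mathcal{Z}})\simeq T_{\PP}(-2)$ is locally free.

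For $L$, the analogous computation is cleaner. Applying $Rq_{*}$ to the second Koszul sequence, both $q_{*}$ and $R^{1}q_{*}$ of $\OO_{X}(-\Delta)\boxtimes\OO_{\PP}(-2)$ vanish: $\OO_{X}(-\Delta)$ has no global sections, while $H^{1}(X,\OO_{X}(-\Delta))=0$ follows from $0\to\OO_{X}(-\Delta)\to\OO_{X}\to\OO_{\Delta}\to 0$ together with $\Delta\simeq\PP^{1}$ and $H^{1}(X,\OO_{X})=0$. The long exact sequence therefore collapses to an isomorphism
$$
q_{*}(p^{*}L\otimes I_{\mathcal{Z}})\simeq H^{0}(X,E(-\Delta))\otimes\OO_{\PP}(-1),
$$
again locally free.

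For the surjectivity, global generation of $E$ gives $H^{0}(X,E)\otimes\OO_{X}\twoheadrightarrow E$; pulling back along $p$ and tensoring with $q^{*}\OO_{\PP}(-1)$ yields a surjection $q^{*}q_{*}(E\boxtimes\OO_{\PP}(-1))\twoheadrightarrow E\boxtimes\OO_{\PP}(-1)$. Composing with the Koszul surjection onto $p^{*}L'\otimes I_{\mathcal{Z}}$, and noting that $q_{*}(E\boxtimes\OO_{\PP}(-1))\twoheadrightarrow q_{*}(p^{*}L'\otimes I_{\mathcal{Z}})$ is surjective by the displayed SES above, forces the evaluation map $q^{*}q_{*}(p^{*}L'\otimes I_{\mathcal{Z}})\to p^{*}L'\otimes I_{\mathcal{Z}}$ to be surjective. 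The identical diagram chase with $E(-\Delta)$ in place of $E$ handles the $L$ case. I do not anticipate any real obstacle: the only non-formal ingredient is the K3 vanishing $H^{1}(X,\OO_{X}(-\Delta))=0$, which is immediate as above.
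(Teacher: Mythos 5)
Your proof is correct. The surjectivity half is essentially the paper's own argument: you sandwich the counit $q^*q_*(p^*L'\otimes I_{\mathcal{Z}})\to p^*L'\otimes I_{\mathcal{Z}}$ in a commutative square under the composite of the evaluation map of the globally generated bundle $E\boxtimes\mathcal{O}_{\PP}(-1)$ with the Koszul surjection, and repeat with $E(-\Delta)$ for $L$; this is exactly the diagram chase in the paper. For local freeness, however, you take a genuinely different and more elementary route. The paper transports the question to $\hat{X}\times\PP_2$ via Lemma~\ref{Z-lemma}, invokes miracle flatness for $\hat{\mathcal{Z}}\to\PP_2$, checks that $h^0(\hat{X},\hat{L}\otimes I_{Z(s)})=k+2$ and $h^0(\hat{X},\hat{L}\otimes I_{Z(s)}\otimes I_v)=k+1$ are constant in $s$, and concludes by Grauert. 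You instead push the two Koszul resolutions forward along $q$ directly: the vanishings $\mathrm{H}^1(X,\mathcal{O}_X)=\mathrm{H}^0(X,\mathcal{O}_X(-\Delta))=\mathrm{H}^1(X,\mathcal{O}_X(-\Delta))=0$ collapse the long exact sequences to $q_*(p^*L'\otimes I_{\mathcal{Z}})\simeq\Coker\left(\mathcal{O}_{\PP}(-2)\to \mathrm{H}^0(E)\otimes\mathcal{O}_{\PP}(-1)\right)$ and $q_*(p^*L\otimes I_{\mathcal{Z}})\simeq \mathrm{H}^0(E(-\Delta))\otimes\mathcal{O}_{\PP}(-1)$. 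The one point that genuinely needs care in your version is that a cokernel of an injection of vector bundles need not be locally free; what saves you is that the pushed-forward Koszul map is fibrewise nonzero (over $[s]$ it sends $1\mapsto s$), \emph{i.e.}~it is the twist of the tautological subbundle inclusion, so the cokernel is the twisted quotient bundle $T_{\PP}(-2)$ --- you do address this, and it is the crux. Your answers are consistent with the paper: $\det q_*(p^*L'\otimes I_{\mathcal{Z}})\simeq\mathcal{O}_{\PP}(-k-1)$ as in Lemma~\ref{first-step}, and the identification $\mathcal{H}\simeq \mathrm{H}^0(X,E(-\Delta))\otimes\mathcal{O}_{\PP}(-1)$ in the proof of the main theorem is literally your second collapse. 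What the paper's detour buys is the flatness of $\hat{\mathcal{Z}}$ over $\PP_2$ and the fibrewise $h^0$ counts, which are reused in Lemma~\ref{lb} and Proposition~\ref{W}; for the statement at hand your argument is shorter and self-contained.
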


\begin{proof}
  We have 
  $$
  i_* q_*\left(p^*L' \otimes I_{\mathcal{Z}}\right) \simeq \hat{q}_* \tau_* \left(p^*L' \otimes I_{\mathcal{Z}}\right)  \simeq  \hat{q}_* \left(\hat{p}^* \hat{L} \otimes \tau_* I_{\mathcal{Z}}\right)\simeq \hat{q}_*\left(\hat{p}^* \hat{L} \otimes  I_{\hat{{\mathcal{Z}}}}\right)
  $$
by  Lemma~\ref{Z-lemma}.
By miracle flatness $I_{\hat{{\mathcal{Z}}}}$ is flat over $\PP_2$.
We have the exact sequence
$$
0 \lra \mathcal{O}_{\hat{X}} \overset{s}\lra  \hat{E} \lra \hat{L} \otimes I_{Z(s)} \lra 0,
$$
and $\mathrm{H}^1(\mathcal{O}_{\hat{X}})=0$, $h^0(\hat{E})=k+3$, so $h^0(\hat{X}, \hat{L} \otimes I_{Z(s)})=k+2$ for any $s\neq 0$, so $q^*q_*(p^*L' \otimes I_{\mathcal{Z}})$ is locally free.
Next,
$$
i_*q_*\left(p^*\left(L'(-\Delta)\right) \otimes I_{\mathcal{Z}}\right) \simeq \hat{q}_* \tau_* \left(p^*\left(L'(-\Delta)\right) \otimes I_{\mathcal{Z}}\right) \simeq  \hat{q}_* \left(\hat{p}^* \hat{L} \otimes I_{\hat{{\mathcal{Z}}}} \otimes \hat{p}^*I_v\right).
$$ 
We have the exact sequence $0 \to I_v \xrightarrow{s}  \hat{E}\otimes I_v \to \hat{L} \otimes I_{Z(s)}\otimes I_v \to 0$. Now $I_v \simeq \mu_* \mathcal{O}_X(-\Delta)$, $ R^j\mu_* \mathcal{O}_X(-\Delta)=0$ for $j>0$ and $\hat{E}\otimes I_v \simeq \mu_*(E(-\Delta))$, so $h^0(I_v)=h^1(I_v)=h^1(\mathcal{O}_X(-\Delta))=0$ and $h^0(\hat{X}, \hat{L} \otimes I_{Z(s)}\otimes I_v)=h^0(X,E(-\Delta))=k+1$, using \cite[Lemma 10]{kemeny-voisin} and the fact that
$q^*q_*(p^*L \otimes I_{\mathcal{Z}})$ is locally free.
Next, we show that
$$
q^*q_*(p^*L' \otimes I_{\mathcal{Z}}) \lra p^*L' \otimes I_{\mathcal{Z}}
$$
is surjective. We have a natural map $\mathrm{H}^0(E) \otimes q^*\mathcal{O}_{\PP}(-1) \to q^*q_*(p^*L' \otimes I_{\mathcal{Z}})$ and a surjection $E \boxtimes \mathcal{O}_{\PP}(-1) \twoheadrightarrow p^*L'\otimes I_{\mathcal{Z}}$. The evaluation map $\mathrm{H}^0(E) \otimes q^*\mathcal{O}_{\PP}(-1) \to E \otimes q^*\mathcal{O}_{\PP}(-1)$ is surjective because $E$ is globally generated. 
We have a commutative diagram
$$\begin{tikzcd}
\mathrm{H}^0(E) \otimes q^*\mathcal{O}_{\PP}(-1) \arrow[r, twoheadrightarrow]  \arrow[d] & E \otimes q^*\mathcal{O}_{\PP}(-1)  \arrow[d, twoheadrightarrow]  \\
q^*q_*(p^*L' \otimes I_{\mathcal{Z}}) \arrow[r]  &p^*L' \otimes I_{\mathcal{Z}}\rlap{,} 
\end{tikzcd}$$
so the lower horizontal map must be surjective. We can show similarly that the map
$$
q^*q_*(p^*L \otimes I_{\mathcal{Z}}) \lra p^*L \otimes I_{\mathcal{Z}}
$$
is surjective. Indeed, we have a natural map
$$
\mathrm{H}^0(E(-\Delta)) \otimes q^*\mathcal{O}_{\PP}(-1) \lra q^*q_*(p^*L \otimes I_{\mathcal{Z}})
$$
and a natural surjection $E(-\Delta)  \boxtimes \mathcal{O}_{\PP}(-1) \hookrightarrow p^*L'(-\Delta) \otimes I_{\mathcal{Z}}=p^*L \otimes I_{\mathcal{Z}}$.
The evaluation map from the group
$\mathrm{H}^0(E(-\Delta)) \otimes q^*\mathcal{O}_{\PP}(-1)$ to $E(-\Delta) \otimes q^*\mathcal{O}_{\PP}(-1)$ is surjective because $E(-\Delta)$ is globally generated; see  \cite[Lemma 10]{kemeny-voisin}. 
\end{proof}
To proceed, we first need a lemma.

\begin{lem} \label{lb}
The sheaves $ R^1 \hat{q}_*(\hat{p}^* \hat{L} \otimes I_{\hat{{\mathcal{Z}}}})$ and $ R^1 \hat{q}_*(\hat{p}^*\hat{L} \otimes I_{\hat{{\mathcal{Z}}}} \otimes  \hat{p}^*I_v)$ on $\PP_2$ are line bundles.
\end{lem}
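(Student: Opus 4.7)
The plan is to compute both direct images by pushing forward Koszul-type resolutions to $\PP_2$ and reading off $R^1\hat{q}_*$ from the resulting long exact sequences. For the first sheaf, I would apply $\hat{q}_*$ directly to
\[
0 \to \OO_{\hat{X}} \boxtimes \OO_{\PP_2}(-2) \to \hat{E} \boxtimes \OO_{\PP_2}(-1) \to \hat{p}^*\hat{L} \otimes I_{\hat{\mathcal{Z}}} \to 0
\]
and exploit that $\hat{X}$ is a Gorenstein K3 (so $h^0(\OO_{\hat{X}})=h^2(\OO_{\hat{X}})=1$, $h^1(\OO_{\hat{X}})=0$, by comparison with $X$ via $R^j\mu_*\OO_X = 0$ for $j>0$), together with $\mathrm{H}^i(\hat{X},\hat{E})=0$ for $i>0$, which is standard for a Lazarsfeld--Mukai bundle. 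The long exact sequence then collapses to $R^1\hat{q}_*(\hat{p}^*\hat{L}\otimes I_{\hat{\mathcal{Z}}})\simeq \mathrm{H}^2(\hat{X},\OO_{\hat{X}})\otimes \OO_{\PP_2}(-2)\simeq \OO_{\PP_2}(-2)$.

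For the second sheaf, I would first manufacture an analogous short exact sequence on $\hat{X}\times\PP_2$. Starting from the Koszul resolution for $E(-\Delta)$ on $X\times\PP$,
\[
0 \to \OO_X(-\Delta)\boxtimes\OO_{\PP}(-2) \to E(-\Delta)\boxtimes\OO_{\PP}(-1) \to p^*L\otimes I_{\mathcal{Z}}\to 0,
\]
I would push forward by $\tau$. Using $R^j\mu_*\OO_X(-\Delta)=0$ for $j>0$, the projection formula $\mu_*E(-\Delta)\simeq \hat{E}\otimes I_v$, and Lemma~\ref{Z-lemma}, one sees that all higher $R^j\tau_*$ of the three terms vanish and that the push-forward sequence reads
\[
0 \to \hat{p}^*I_v\otimes \hat{q}^*\OO_{\PP_2}(-2) \to (\hat{E}\otimes I_v)\boxtimes\OO_{\PP_2}(-1) \to \hat{p}^*\hat{L}\otimes I_{\hat{\mathcal{Z}}}\otimes \hat{p}^*I_v\to 0.
\]
Applying $\hat{q}_*$ and using $\mathrm{H}^i(\hat{X},\hat{E}\otimes I_v)\simeq \mathrm{H}^i(X,E(-\Delta))$ (again by $R^j\mu_*\OO_X(-\Delta)=0$ and the projection formula) reduces the computation to cohomology on $\hat{X}$ and $X$.

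The cohomology of $I_v$ is immediate from $0\to I_v\to\OO_{\hat{X}}\to\OO_v\to 0$: one gets $h^0(I_v)=h^1(I_v)=0$ and $h^2(I_v)=1$. For $E(-\Delta)$, a Riemann--Roch computation with $c_1(E(-\Delta))=L'-2\Delta$ gives $\chi(E(-\Delta))=k+1=h^0(E(-\Delta))$, so $h^1=h^2$; Serre duality then identifies $h^2(E(-\Delta))$ with $h^0(E\otimes L^{-1})$, and tensoring the defining sequence $0\to\OO_X\to E\to L'\otimes I_{Z(s)}\to 0$ by $L^{-1}$ reduces this to $h^0(\OO_X(\Delta)\otimes I_{Z(s)})=0$, which holds for general $s$ since the zero locus of a general section of $E$ does not lie on $\Delta$ (as $L'\cdot\Delta=0$). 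With these vanishings, the long exact sequence of $R^\bullet\hat{q}_*$ collapses to $R^1\hat{q}_*(\hat{p}^*\hat{L}\otimes I_{\hat{\mathcal{Z}}}\otimes\hat{p}^*I_v)\simeq \mathrm{H}^2(\hat{X},I_v)\otimes\OO_{\PP_2}(-2)\simeq \OO_{\PP_2}(-2)$.

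The main obstacle is verifying the higher-cohomology vanishings for $\hat{E}$ on the nodal $\hat{X}$ and for $E(-\Delta)$ on $X$; once these are in hand, everything else is a formal manipulation with the two Koszul sequences, the projection formula, and Lemma~\ref{Z-lemma}.
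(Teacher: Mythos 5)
Your proof is correct, and it reaches the conclusion by a slightly different mechanism than the paper. The paper works fiberwise: it restricts the Koszul sequence to each fiber of $\hat{q}$, computes $h^1(\hat{X},\hat{L}\otimes I_{Z(s)})=h^2(\OO_{\hat{X}})=1$ (resp.\ $h^1(\hat{X},\hat{L}\otimes I_{Z(s)}\otimes I_v)=h^2(I_v)=1$) for every nonzero $s$, and then invokes Grauert's theorem, which requires the flatness of $I_{\hat{\mathcal{Z}}}$ over $\PP_2$ established via miracle flatness. You instead push forward the global Koszul resolutions (the second one obtained by applying $\tau_*$ to the twisted resolution on $X\times\PP$, using $R^j\mu_*\OO_X(-\Delta)=0$ and Lemma~\ref{Z-lemma}) and read off $R^1\hat{q}_*$ from the long exact sequence; this sidesteps Grauert and flatness entirely and yields the sharper statement that both sheaves are isomorphic to $\OO_{\PP_2}(-2)$, not merely line bundles. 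The fiberwise cohomological inputs are the same in both arguments: $\mathrm{H}^{i}(\hat{X},\hat{E})=0$ and $\mathrm{H}^{i}(\hat{X},\hat{E}\otimes I_v)\simeq\mathrm{H}^{i}(X,E(-\Delta))=0$ for $i>0$, together with $h^2(\OO_{\hat{X}})=h^2(I_v)=1$. For the vanishing of $\mathrm{H}^{i}(X,E(-\Delta))$ the paper simply cites the fact that $E(-\Delta)$ is a Lazarsfeld--Mukai bundle, whereas you give a self-contained Riemann--Roch plus Serre duality argument reducing to $h^0(\OO_X(\Delta)\otimes I_{Z(s)})=0$ for a general section $s$; that step is fine, provided you note that a general section of $E=\mu^*\hat{E}$ has finite zero locus disjoint from $\Delta$ (equivalently, a general section of $\hat{E}$ does not vanish at the node $v$), which follows from global generation of $\hat{E}$.
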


\begin{proof}
  We have the exact sequence
  $$
  0 \lra \mathcal{O}_{\hat{X}} \overset{t}\lra  \hat{E} \lra \hat{L} \otimes I_{Z(t)} \lra 0
  $$
  for $t \neq 0 \in \mathrm{H}^0(\hat{E})$. Since $\mathrm{H}^1(\hat{E})=\mathrm{H}^2(\hat{E})=0$, this gives $h^1(\hat{X},  \hat{L} \otimes I_{Z(t)})=h^2(\mathcal{O}_{\hat{X}})=1$, so $R^1\hat{q}_*(\hat{p}^*\hat{L} \otimes I_{\hat{{\mathcal{Z}}}})$ is a line bundle by Grauert's theorem. To study $R^1 \hat{q}_*(\hat{p}^*\hat{L} \otimes I_{\hat{{\mathcal{Z}}}} \otimes  \hat{p}^*I_v)$, we use Grauert's theorem plus the exact sequence
  $$
  0 \lra I_v \overset{s}\lra  \hat{E}\otimes I_v \lra \hat{L} \otimes I_{Z(s)}\otimes I_v \lra 0
  $$
of sheaves on $\hat{X}$, for $s \neq 0 \in \mathrm{H}^0(\hat{E})$.  Note that the finite map $\hat{{\mathcal{Z}}} \to \PP_2$ is flat by miracle flatness. Now we claim $\mathrm{H}^i(\hat{X}, \hat{E} \otimes I_v)=0$ for $i=1,2$. Indeed, $\mu_* \mathcal{O}_X(-\Delta) \simeq I_v$ and $R^i\mu_* \mathcal{O}_X(-\Delta) =0$ for $i>0$. 
Thus, $R^i \mu_* (E(-\Delta))\simeq \hat{E} \otimes R^i\mu_* \mathcal{O}_X(-\Delta)=0$ for $i>0$. So $h^i(\hat{X},\hat{E} \otimes I_v)=h^i(X,E(-\Delta))$ for $i>0$. But this vanishes, as $E(-\Delta)$ is a Lazarsfeld--Mukai bundle; see  \cite[Lemma 10]{kemeny-voisin}. So $h^1(\hat{L} \otimes I_{Z(s)}\otimes I_v)=h^2(I_v)=h^2(\mathcal{O}_{\hat{X}})=1$, from the exact sequence
$$0 \lra I_v \lra \mathcal{O}_{\hat{X}} \lra \mathcal{O}_v \lra 0.
$$
So $ R^1 \hat{q}_*(\hat{p}^*\hat{L} \otimes I_{\hat{{\mathcal{Z}}}} \otimes  \hat{p}^*I_v)$ is a line bundle.
\end{proof}

The next result is analogous to \cite[Lemma 1]{kemeny-voisin}.

\begin{prop} \label{W}
Define a coherent sheaf on $\PP$ by $W:=\Coker\left(q_*(p^*L' \otimes I_{\mathcal{Z}}) \to q_*p^*L' \right)$.
Then $W$ is locally free of rank $k+1$. Further, define a coherent sheaf by $\widetilde{W}:=\Coker\left(q_*(p^*L \otimes I_{\mathcal{Z}}) \to q_*p^*L \right)$.
Then $\widetilde{W}$ is also locally free of rank $k+1$.
\end{prop}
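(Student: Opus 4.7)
The plan is to apply $q_*$ to the ideal sequence of $\mathcal{Z}$ in $X\times \PP$ twisted by $p^*L'$, exhibit $W$ as the kernel of a surjection between two locally free sheaves, and then run a parallel argument for $\widetilde{W}$ with $L$ in place of $L'$. Twisting the structure sequence $0 \to I_{\mathcal{Z}} \to \mathcal{O}_{X\times \PP} \to \mathcal{O}_{\mathcal{Z}} \to 0$ by $p^*L'$ and applying $q_*$, the vanishing $\mathrm{H}^1(X,L')=0$ yields $R^1q_*p^*L' = 0$, so one gets the four-term exact sequence
$$0 \to q_*(p^*L' \otimes I_{\mathcal{Z}}) \to q_*p^*L' \to q_*(p^*L' \otimes \mathcal{O}_{\mathcal{Z}}) \to R^1q_*(p^*L' \otimes I_{\mathcal{Z}}) \to 0,$$
and hence $W$ also fits into $0 \to W \to q_*(p^*L' \otimes \mathcal{O}_{\mathcal{Z}}) \to R^1q_*(p^*L' \otimes I_{\mathcal{Z}}) \to 0$.

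The next step is to identify the two right-hand terms with sheaves on $\PP_2$ via $\tau$ and $i$. Applying $R\tau_*$ to the Koszul resolution of $p^*L' \otimes I_{\mathcal{Z}}$ and using $R^j\mu_*\mathcal{O}_X = 0$ and $R^j\mu_*E \simeq \hat E \otimes R^j\mu_*\mathcal{O}_X = 0$ for $j>0$, I get $R^j\tau_*(p^*L' \otimes I_{\mathcal{Z}})=0$ for $j>0$, and then $R^j\tau_*(p^*L' \otimes \mathcal{O}_{\mathcal{Z}})=0$ for $j>0$ follows from the ideal sequence (twist by $(p^*L')^{-1}$ and apply the projection formula). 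Combined with $\hat q \circ \tau = i\circ q$ and the fact that $i$ is an isomorphism, the Leray spectral sequence collapses to give $R^1q_*(p^*L' \otimes I_{\mathcal{Z}}) \simeq i^*R^1\hat q_*(\hat p^*\hat L \otimes I_{\hat{\mathcal{Z}}})$ and $q_*(p^*L' \otimes \mathcal{O}_{\mathcal{Z}}) \simeq i^*\hat q_*(\hat p^*\hat L \otimes \mathcal{O}_{\hat{\mathcal{Z}}})$. By Lemma~\ref{lb} the first is a line bundle. For the second, $\hat{\mathcal{Z}} \to \PP_2$ is finite (by construction) and flat (by miracle flatness, as already used in Lemma~\ref{surj-lem}), so the pushforward of the locally free sheaf $\hat p^*\hat L$ is locally free.

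Therefore $W$ is the kernel of a surjection between locally free sheaves, hence locally free. From the first three terms of the four-term sequence its rank equals $h^0(X,L') - \mathrm{rank}\, q_*(p^*L' \otimes I_{\mathcal{Z}}) = (2k+3) - (k+2) = k+1$. For $\widetilde{W}$ I would run the parallel argument: using $\mathrm{H}^1(X,L) = 0$, the twisted Koszul resolution of $p^*L \otimes I_{\mathcal{Z}}$ by $\mathcal{O}_X(-\Delta)\boxtimes \mathcal{O}_{\PP}(-2)$ and $E(-\Delta)\boxtimes \mathcal{O}_{\PP}(-1)$ (whose $R^j\tau_*$ vanishings for $j>0$ again follow from $R^j\mu_*\mathcal{O}_X(-\Delta)=0$ and $R^j\mu_*E(-\Delta) = 0$), the third identity in Lemma~\ref{Z-lemma} to rewrite the pushforwards on $\PP_2$, and the second statement of Lemma~\ref{lb}. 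The rank comes out to $h^0(X,L) - \mathrm{rank}\, q_*(p^*L \otimes I_{\mathcal{Z}}) = (2k+2) - (k+1) = k+1$.

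The main bookkeeping obstacle is the systematic vanishing of the higher direct images $R^j\tau_*$, which ultimately reduces to the fact that $\mu$ is a resolution of a rational (indeed $A_1$) singularity. Once this is in hand, the essential geometric input---the miracle flatness of $\hat{\mathcal{Z}} \to \PP_2$---supplies the local freeness of $q_*(p^*L' \otimes \mathcal{O}_{\mathcal{Z}})$ (and its analogue for $L$), and the rest is formal.
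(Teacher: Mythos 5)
Your treatment of $W$ is correct and is essentially the paper's argument: transfer everything to $\hat{X}\times \PP_2$ via $\tau$, use that $\hat{\mathcal{Z}}\to\PP_2$ is finite and flat to get local freeness of the middle term by Grauert, use Lemma~\ref{lb} for the $R^1$ term, and exhibit $W$ as the kernel of a surjection of vector bundles. The gap is in the half of the proof you compress into ``run the parallel argument.'' For $\widetilde{W}$ the four-term sequence gives $0\to\widetilde{W}\to q_*((p^*L)|_{\mathcal{Z}})\to R^1q_*(p^*L\otimes I_{\mathcal{Z}})\to 0$, and while the $R^1$ term is indeed covered by the second statement of Lemma~\ref{lb}, the middle term is not reached by your mechanism. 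On $X\times\PP$ the map $\mathcal{Z}\to\PP$ is \emph{not} finite (it has one-dimensional fibres over $\PP(\mathrm{H}^0(E(-\Delta)))$) --- this is precisely the difficulty flagged in the first sentence of Section~2 --- and after pushing forward by $\tau$ the relevant sheaf is $\hat{p}^*\hat{L}\otimes(\hat{p}^*I_v/(I_{\hat{\mathcal{Z}}}\cdot\hat{p}^*I_v))$, which is supported on $\hat{\mathcal{Z}}$ but is \emph{not} the restriction of a line bundle to the finite flat scheme $\hat{\mathcal{Z}}$: the factor $I_v$ is not locally free along the fibre of $\hat{\mathcal{Z}}$ over the node $v$. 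So ``finite flat plus Grauert'' does not apply, and the constancy of the fibrewise $h^0$ is exactly what remains to be proved; asserting the parallel argument here assumes the hard part.

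This is where the paper's proof does its real work. It introduces the auxiliary cokernel $S:=\Coker(\hat{q}_*(\hat{p}^*\hat{L}\otimes I_{\hat{\mathcal{Z}}}\otimes\hat{p}^*I_v)\to\hat{q}_*\hat{p}^*\hat{L})$, mapping into the full $\hat{q}_*\hat{p}^*\hat{L}$ rather than into $\hat{q}_*(\hat{p}^*\hat{L}\otimes\hat{p}^*I_v)$, so that the quotient to which Grauert is applied becomes $\hat{p}^*\hat{L}|_{\hat{T}}$ for the subscheme $\hat{T}=\hat{\mathcal{Z}}\cup(v\times\PP_2)$ cut out by the product ideal $I_{\hat{\mathcal{Z}}}\,\hat{p}^*I_v$ (one must also check $I_{\hat{\mathcal{Z}}}\,\hat{p}^*I_v\simeq I_{\hat{\mathcal{Z}}}\otimes\hat{p}^*I_v$, which is done via Lemma~\ref{Z-lemma}). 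The scheme $\hat{T}$ \emph{is} finite over $\PP_2$ with constant length $k+3$, so $\hat{q}_*(\hat{p}^*\hat{L}|_{\hat{T}})$ is a rank-$(k+3)$ bundle, $S$ is locally free of rank $k+2$, and $\hat{W}$ (hence $\widetilde{W}$) is recovered as the kernel of the surjection $S\to\hat{q}_*(\hat{p}^*\hat{L}|_{v\times\PP_2})$ onto a line bundle. Your rank computations are fine once local freeness is known, but without this detour (or a substitute for it) the local freeness of $\widetilde{W}$ is not established.
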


\begin{proof}
Define $ W':= \Coker(\hat{q}_*(\hat{p}^*\hat{L} \otimes I_{\hat{{\mathcal{Z}}}} ) \to \hat{q}_*\hat{p}^*\hat{L}  )$ and $\hat{W}:=\Coker(\hat{q}_*(\hat{p}^*\hat{L} \otimes I_{\hat{{\mathcal{Z}}}} \otimes  \hat{p}^*I_v ) \to \hat{q}_*(\hat{p}^*\hat{L}\otimes  \hat{p}^*I_v)  )$
on~$\PP_2$. We will show they are locally free. For the first sheaf,  
 we have the exact sequence
 $$
 0 \lra W' \lra \hat{q}_*\left({\hat{p}^*\hat{L}}_{|_{\hat{{\mathcal{Z}}}}}\right) \lra R^1 \hat{q}_*\left(\hat{p}^*\hat{L} \otimes I_{\hat{{\mathcal{Z}}}}\right) \lra 0.
 $$
 By Lemma \ref{lb}, $R^1 \hat{q}_*(\hat{p}^*\hat{L} \otimes I_{\hat{{\mathcal{Z}}}})$ is a line bundle. The finite map $\hat{{\mathcal{Z}}} \to \PP_2$ is flat by miracle flatness, so, by Grauert's theorem, $ \hat{q}_*({\hat{p}^*\hat{L}}_{|_{\hat{{\mathcal{Z}}}}})$ is a vector bundle of rank $k+2$.  So $W'$ is locally free of rank $k+1$. Define
$S:= \Coker(\hat{q}_*(\hat{p}^*\hat{L} \otimes I_{\hat{{\mathcal{Z}}}} \otimes  \hat{p}^*I_v ) \to \hat{q}_*\hat{p}^*\hat{L})$.
We have an exact sequence
$ 0 \to  \hat{q}_*(\hat{p}^*\hat{L}\otimes  p^*I_v) \to \hat{q}_* \hat{p}^*\hat{L} \to \hat{q}_* (\hat{p}^*\hat{L}_{|_{p \times \PP_2}}) \to 0$,
and $\hat{q}_*(\hat{p}^*\hat{L}_{|_{p \times \PP_2}})$ is a line bundle on $\PP_2$. Indeed, it suffices to show $R^1 \hat{q}_*(\hat{p}^*\hat{L}\otimes  \hat{p}^* I_v)=0$, and, by Grauert's theorem, for this it suffices to note $h^1(X,L)=h^1(X,L'(-\Delta))=h^1(\hat{X},\hat{L}\otimes I_v)=0$; see \cite[Lemma 6]{kemeny-voisin}. So we have an exact sequence
$$
0 \lra \hat{W} \lra S \lra \hat{q}_*\left(\hat{p}^*\hat{L}_{|_{p \times \PP_2}}\right) \lra 0,
$$
where $\hat{q}_*(\hat{p}^*\hat{L}_{|_{p \times \PP_2}})$ is a line bundle. To show $\hat{W}$ is locally free, it suffices to show $S$ is locally free.
 
 Let $T \seq X \times \PP$ denote the closed subscheme $\mathcal{Z} \bigcup \Delta \times \PP $ defined by the product ideal $I_{\mathcal{Z}}  \, p^*I_{\Delta}$. Since $\Delta \times \PP$ is a divisor, this ideal is isomorphic to $I_{\mathcal{Z}} \otimes  p^*I_{\Delta}$. Set $\hat{T} \seq \hat{X} \times \PP_2$ to be the closed  subscheme defined by the product ideal $I_{\hat{{\mathcal{Z}}}} \, \hat{p}^* I_{v}$. Applying the left-exact functor $\tau_*$, and using from Lemma \ref{Z-lemma} the fact that $\tau_*(I_{\mathcal{Z}} \otimes  p^*I_{\Delta})\simeq I_{\hat{{\mathcal{Z}}}} \otimes  \hat{p}^*I_{v}$ and the fact that $\tau_* \mathcal{O}_{X \times \PP}\simeq \mathcal{O}_{\hat{X} \times \PP_2}$ (because, up to isomorphism, we can view $\tau$ as a blow-up in $v \times \PP_2$), we get that the natural map
$I_{\hat{{\mathcal{Z}}}}\otimes  \hat{p}^* I_{v} \to \mathcal{O}_{\hat{X} \times \PP_2}$
is injective, \textit{i.e.} ~$I_{\hat{{\mathcal{Z}}}} \,  \hat{p}^* I_{v} \simeq I_{\hat{{\mathcal{Z}}}}  \otimes \hat{p}^* I_{v}$. So the zero locus of $I_{\hat{{\mathcal{Z}}}} \otimes  I_{v\times \PP_2}$ is $\hat{T}:=\hat{{\mathcal{Z}}} \bigcup \{ v\times \PP_2 \}$.
 We have the exact sequence
 $$
 0 \lra S \lra \hat{q}_*\left({\hat{p}^*\hat{L}}_{|_{\hat{T}}}\right) \lra R^1 \hat{q}_*\left(\hat{p}^*\hat{L} \otimes I_{\hat{{\mathcal{Z}}}} \otimes  I_{p\times \PP_2}\right) \lra 0,
 $$
since $\mathrm{H}^1(\hat{X},\hat{L})=0$. Now $ \hat{q}_*({\hat{p}^*\hat{L}}_{|_{\hat{T}}})$ is a vector bundle of rank $k+3$ by Grauert's theorem. From Lemma \ref{lb}, $ R^1 \hat{q}_*(\hat{L} \otimes I_{\hat{{\mathcal{Z}}}} \otimes  I_{v\times \PP_2})$ is a line bundle. It follows that $S$ is locally free of rank $k+2$ and thus 
$$
\hat{W}:=\Coker\left(\hat{q}_*\left(\hat{p}^*\hat{L} \otimes I_{\hat{{\mathcal{Z}}}} \otimes  \hat{p}^*I_v \right) \lra \hat{q}_*\left(\hat{p}^*\hat{L}\otimes  \hat{p}^*I_v\right)  \right)
$$
is locally free. Now $i \colon \PP\to \PP_2$ is an isomorphism, so the sheaves
$i^* W' \simeq \Coker(i^*\hat{q}_*(\hat{p}^*\hat{L} \otimes I_{\hat{{\mathcal{Z}}}} ) \to i^*\hat{q}_*(\hat{p}^*\hat{L})  )$  and 
$i^* \hat{W}\simeq \Coker(i^*\hat{q}_*(\hat{p}^*\hat{L} \otimes I_{\hat{{\mathcal{Z}}}} \otimes  \hat{p}^*I_v ) \to i^*\hat{q}_*(\hat{p}^*\hat{L}\otimes  \hat{p}^*I_v)  )$
are locally free. But $i$ is an isomorphism, so
$$
i^*\hat{q}_*\left(\hat{p}^*\hat{L} \otimes I_{\hat{{\mathcal{Z}}}} \otimes  \hat{p}^*I_v \right) \simeq q_*\left( p^*L' \otimes I_{Z} \otimes  p^*\mathcal{O}_X(-\Delta) \right) \\
\simeq q_*\left( p^*L\otimes I_{Z}\right).
$$
Likewise $i^*\hat{q}_*(\hat{p}^*\hat{L}\otimes  \hat{p}^*I_v)\simeq q_*( p^*L)$, $i^*\hat{q}_*(\hat{p}^*\hat{L})\simeq q_*( p^*L')$, $i^*\hat{q}_*(\hat{p}^*\hat{L} \otimes I_{\hat{{\mathcal{Z}}}} ) \simeq q_*( p^*L'\otimes I_{Z})$, $i^* \hat{W}\simeq \widetilde{W}$ and $i^* W' \simeq W$, which completes the proof.
\end{proof}

Let $\pi \colon B \to X\times \PP$ be the blow-up along $\mathcal{Z}$, with exceptional divisor $D$, which is smooth since $\mathcal{Z}$ is smooth. Set $p':=p \circ \pi$, $q':= q \circ \pi$. The maps $q^*q_*(p^*L' \otimes I_{\mathcal{Z}}) \to p^*L' \otimes I_{\mathcal{Z}}$ and $q^*q_*(p^*L \otimes I_{\mathcal{Z}}) \to p^*L \otimes I_{\mathcal{Z}}$ are surjective by Lemma \ref{surj-lem}. Notice ${q'}_*({p'}^*L \otimes I_D)\simeq q_*(p^*L \otimes I_{\mathcal{Z}})$, and likewise replacing $L$ by $L'$. Noting $I_D$ is a quotient of $\pi^*I_{\mathcal{Z}}$, we get surjective morphisms ${q'}^*{q'}_*({p'}^*L \otimes I_D) \to {p'}^*L \otimes I_D$ and ${q'}^*{q'}_*({p'}^*L' \otimes I_D) \to {p'}^*L' \otimes I_D$. Since ${q'}^*{q'}_*({p'}^*L \otimes I_D)$ and ${q'}^*{q'}_*({p'}^*L' \otimes I_D)$ are locally free and $D$ is a divisor, we get \emph{vector bundles} $S_1$ and $S_2$ defined by the exact sequences. 
\begin{align*}
0 & \lra S_1 \lra  {q'}^*{q'}_*\left({p'}^*L' \otimes I_D\right) \lra {p'}^*L' \otimes I_D \lra 0\quad\text{and}\\
0 & \lra S_2 \lra {q'}^*{q'}_*\left({p'}^*L \otimes I_D\right) \lra {p'}^*L \otimes I_D \lra 0.
\end{align*}

We have surjections
$$
{q'}^*{q'_*}{p'}^*L' \longtwoheadrightarrow {p'}^*L' \longtwoheadrightarrow {p'}^*L'_{|_D}, \: \: {q'}^*{q'_*}{p'}^*L \longtwoheadrightarrow {p'}^*L\longtwoheadrightarrow {p'}^*L_{|_D}.
$$
These maps induce surjective maps of sheaves
$$
{q'}^*W \longtwoheadrightarrow {p'}^*L'_{|_D}\quad \text{and}\quad {q'}^*\widetilde{W} \longtwoheadrightarrow {p'}^*L_{|_D}.
$$
Define vector bundles $\Gamma_1, \Gamma_1$, both of rank $k+1$, by the exact sequences
$$
0  \lra \Gamma_1 \lra {q'}^*W \lra {p'}^*L'_{|_D} \lra 0 \quad\text{and}\quad 0  \lra \Gamma_2 \lra {q'}^*\widetilde{W} \lra {p'}^*L_{|_D} \lra 0.
$$ 
We have a commutative diagram with exact rows
$$\begin{tikzcd}
0  \arrow[r] & S_2  \arrow[d]  \arrow[r]  & \pi^* \mathcal{M}_L  \arrow[r]  \arrow[d] & \Gamma_2  \arrow[r]  \arrow[d]  & 0 \\
0 \arrow[r] & S_1 \arrow[r] & \pi^* \mathcal{M}_{L'} \arrow[r] & \Gamma_1 \arrow[r] &0\rlap{,}
\end{tikzcd}$$
where the vertical maps are induced from the natural inclusion $L \hookrightarrow L'$ (induced from the effective divisor~$\Delta$) on $X$ and where $\mathcal{M}_L:=p^*M_L$, $\mathcal{M}_{L'}:=p^*M_{L'}$.

We get a natural commutative diagram
$$\begin{tikzcd}
\mathrm{H}^1\left(B, \pi^*\left(\bigwedge^{k+1}\mathcal{M}_{L}(p^*L')\right)\right) \arrow[r, "f"]  \arrow[d]& \mathrm{H}^1\left(B, \bigwedge^{k+1}\Gamma_2\left({p'}^*L'\right)\right) \arrow[d, "g"]\\
\mathrm{H}^1\left(B, \pi^*\left(\bigwedge^{k+1}\mathcal{M}_{L'}\left(p^*L'\right)\right)\right)  \arrow[r, "h"]  & \mathrm{H}^1\left(B, \bigwedge^{k+1}\Gamma_1\left({p'}^*L'\right)\right)\rlap{.}
    \end{tikzcd}$$

\begin{lem} \label{inj-lemma}
The natural map $l \colon \widetilde{W} \to W$, induced by the map $L \hookrightarrow L'$, given by multiplication by a nonzero section $s_{\Delta} \in \mathrm{H}^0(\mathcal{O}_X(\Delta))$, is injective.
\end{lem}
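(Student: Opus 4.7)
The plan is to pass to fibers over $\PP$. Both $\widetilde{W}$ and $W$ are locally free of rank $k+1$ on $\PP$ by Proposition~\ref{W}, so in particular $\widetilde{W}$ is torsion-free, and it suffices to show that the fiber map $l_s$ is injective at a single general $s \in \PP$.

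First I would identify the fibers explicitly. For a nonzero $s \in \mathrm{H}^0(X,E)$, the local freeness of the four sheaves $q_*(p^*L \otimes I_{\mathcal{Z}})$, $q_*(p^*L' \otimes I_{\mathcal{Z}})$, $q_*p^*L$, and $q_*p^*L'$, established in the proofs of Lemma~\ref{surj-lem} and Proposition~\ref{W}, allows one to apply Grauert's theorem to obtain identifications
\begin{align*}
\widetilde{W}_s &\simeq \mathrm{H}^0(X, L)/\mathrm{H}^0(X, L \otimes I_{Z(s)}), \\
W_s &\simeq \mathrm{H}^0(X, L')/\mathrm{H}^0(X, L' \otimes I_{Z(s)}),
\end{align*}
with $l_s$ induced by multiplication by $s_\Delta$.

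The key geometric step is to check that for a general $s \in \PP$, the zero scheme $Z(s)$ is disjoint from $\Delta$. Since $E = \mu^*\hat{E}$ and $\mu$ contracts $\Delta$ to the node $v$, we have $E|_\Delta \simeq \hat{E}_v \otimes \mathcal{O}_\Delta \simeq \mathcal{O}_\Delta^{\oplus 2}$. From the exact sequence $0 \to E(-\Delta) \to E \to E|_\Delta \to 0$, together with the stated $h^0(X,E) = k+3$, $h^0(X,E(-\Delta)) = k+1$, and $h^0(\Delta, E|_\Delta) = 2$, the restriction $\mathrm{H}^0(X, E) \to \mathrm{H}^0(\Delta, E|_\Delta)$ is surjective. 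Any nonzero constant section of $\mathcal{O}_\Delta^{\oplus 2}$ is nowhere-vanishing on $\Delta \simeq \PP^1$, so a generic $s$ restricts to a nowhere-vanishing section of $E|_\Delta$, and hence $Z(s) \cap \Delta = \emptyset$ scheme-theoretically.

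For such an $s$, the section $s_\Delta$ is a unit on the scheme $Z(s)$, so multiplication by $s_\Delta$ gives an isomorphism $L|_{Z(s)} \xrightarrow{\sim} L'|_{Z(s)}$. Under the restriction maps, $\widetilde{W}_s$ and $W_s$ embed into $\mathrm{H}^0(Z(s), L|_{Z(s)})$ and $\mathrm{H}^0(Z(s), L'|_{Z(s)})$, respectively, and $l_s$ becomes the restriction of this isomorphism to these subspaces, which is automatically injective. Since this holds on a dense open of $\PP$ and $\widetilde{W}$ is torsion-free, $l$ itself is injective. The main delicate point I anticipate is verifying $Z(s) \cap \Delta = \emptyset$ for generic $s$; once that geometric input is in hand, the remainder of the argument is essentially formal.
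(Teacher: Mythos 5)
Your proof is correct, but it takes a genuinely different route from the paper's. The paper argues globally and much more cheaply: from the defining exact sequences one gets embeddings $\widetilde{W} \hookrightarrow q_*\left(\left(p^*L\right)_{|_{\mathcal{Z}}}\right)$ and $W \hookrightarrow q_*\left(\left(p^*L'\right)_{|_{\mathcal{Z}}}\right)$ compatible with multiplication by $s_{p^*\Delta}$, so by left-exactness of $q_*$ it suffices that $\cdot s_{p^*\Delta}\colon (p^*L)_{|_{\mathcal{Z}}} \to (p^*L')_{|_{\mathcal{Z}}}$ be injective, and this holds simply because $\mathcal{Z}$ is integral and not contained in the divisor $\Delta \times \PP$. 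That argument uses neither the local freeness of $W$ and $\widetilde{W}$ from Proposition~\ref{W}, nor cohomology and base change, nor any analysis of individual zero schemes $Z(s)$. Your fiberwise argument instead needs the stronger geometric input that $Z(s) \cap \Delta = \emptyset$ for general $s$, which you establish correctly via the surjectivity of $\mathrm{H}^0(X,E) \to \mathrm{H}^0(\Delta, E_{|_\Delta})$ and the fact that a nonzero section of $\mathcal{O}_\Delta^{\oplus 2}$ is nowhere vanishing; you also need Grauert's theorem to identify the fibers of $\widetilde{W}$ and $W$, for which one should record that $p^*L \otimes I_{\mathcal{Z}}$ is flat over $\PP$ (this follows from its two-term Koszul resolution, so the gap is minor and fillable). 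What your approach buys is an explicit description of the generic fiber of $l$ and the sharper conclusion that $l$ is generically an isomorphism of bundles of equal rank; what the paper's buys is brevity and logical independence from Proposition~\ref{W}. Both ultimately rest on the same fact, namely that $s_\Delta$ does not vanish identically along $\mathcal{Z}$.
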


\begin{proof}
Recall $W:=\Coker\left(q_*(p^*L' \otimes I_{\mathcal{Z}}) \to q_*p^*L' \right)$ and $\widetilde{W}:=\Coker\left(q_*(p^*L \otimes I_{\mathcal{Z}}) \to q_*p^*L \right)$.
We have a commutative diagram
$$\begin{tikzcd}
0 \arrow[r]  & \widetilde{W}  \arrow[r] \arrow[d, "l"] & q_* \left(\left(p^*L\right)_{|_{\mathcal{Z}}}\right)   \arrow[r] \arrow[d,  "q_*({\cdot s_{p^*\Delta}}_{|_{\mathcal{Z}}})"] &R^1 q_* \left(p^*L \otimes I_{\mathcal{Z}}\right) \arrow[r] \arrow[d] &0\\
0 \arrow[r]  & W  \arrow[r] &q_* \left(\left(p^*L'\right)_{|_{\mathcal{Z}}}\right) \arrow[r] &R^1 q_* \left(p^*L' \otimes I_{\mathcal{Z}}\right)  \arrow[r] &0\rlap{.}
\end{tikzcd}$$
To prove $l$ is injective, it suffices to show $q_* ({ \cdot { s_{p^*\Delta}}_{|_{\mathcal{Z}} } })$ is injective. Since $q_*$ is left-exact, for this it suffices to show multiplication by ${s_{p^*\Delta}}_{|_{\mathcal{Z}}}$ yields an injective map
$(p^*L)_{|_{\mathcal{Z}}} \to (p^*L')_{|_{\mathcal{Z}}}$, and for this it suffices to note $\mathcal{Z}$ is irreducible and is not contained in $\Delta \times \PP$.
\end{proof}

We now prove the first of three statements needed for the proof. 

\begin{lem} \label{first-step}
The natural morphism $g \colon \mathrm{H}^1(B, \pi^*(\bigwedge^{k+1}\Gamma_2({p'}^*L')) \to  \mathrm{H}^1(B, \bigwedge^{k+1}\Gamma_1({p'}^*L'))$ is an isomorphism. Further, $\det \Gamma_i \simeq {q'}^*\mathcal{O}_{\PP}(k+1)(-D)$ for $i=1,2$.
\end{lem}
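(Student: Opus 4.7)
My plan is to first carry out the determinant computation, since the isomorphism claim for $g$ will then follow formally. The defining exact sequences for $\Gamma_1$ and $\Gamma_2$, together with the standard two-term resolution $0 \to {p'}^*\mathcal{L}(-D) \to {p'}^*\mathcal{L} \to {p'}^*\mathcal{L}_{|_D} \to 0$ valid for any line bundle $\mathcal{L}$ on $X$, imply $\det({p'}^*L_{|_D}) = \det({p'}^*L'_{|_D}) = \mathcal{O}_B(D)$. Therefore
$$
\det \Gamma_1 \simeq {q'}^*(\det W) \otimes \mathcal{O}_B(-D), \qquad \det \Gamma_2 \simeq {q'}^*(\det \widetilde W) \otimes \mathcal{O}_B(-D),
$$
reducing the determinant assertion to showing $\det W \simeq \det \widetilde W \simeq \mathcal{O}_{\PP}(k+1)$.

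To compute $\det W$, I would push the Koszul complex defining $I_{\mathcal{Z}}$ forward along $q$. Using $\mathrm{H}^1(X,\mathcal{O}_X) = 0$ and $\mathrm{H}^{>0}(X,E) = 0$ gives the short exact sequence
$$
0 \lra \mathcal{O}_{\PP}(-2) \lra \mathrm{H}^0(E) \otimes \mathcal{O}_{\PP}(-1) \lra q_*\left(p^*L' \otimes I_{\mathcal{Z}}\right) \lra 0,
$$
whence $\det q_*(p^*L' \otimes I_{\mathcal{Z}}) \simeq \mathcal{O}_{\PP}(-(k+1))$ since $h^0(E) = k+3$. Combined with triviality of $q_*p^*L'$ and the defining short exact sequence of $W$, this yields $\det W \simeq \mathcal{O}_{\PP}(k+1)$. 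For $\widetilde W$, I would twist the Koszul complex by $p^*\mathcal{O}_X(-\Delta)$ and push forward, using $\mathrm{H}^i(X,\mathcal{O}_X(-\Delta)) = 0$ for $i = 0,1$ (which holds since $\Delta$ is a smooth rational $(-2)$-curve on a K3) together with $\mathrm{H}^{>0}(X,E(-\Delta)) = 0$; this forces $q_*(p^*L \otimes I_{\mathcal{Z}}) \simeq \mathcal{O}_{\PP}(-1)^{\oplus(k+1)}$, whence $\det \widetilde W \simeq \mathcal{O}_{\PP}(k+1)$.

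For the isomorphism of $g$, the key observation is that $\bigwedge^{k+1}\Gamma_i = \det \Gamma_i$ since $\Gamma_i$ has rank $k+1$, and both these line bundles are isomorphic to $\mathcal{L} := {q'}^*\mathcal{O}_{\PP}(k+1)(-D)$. Since $B$ is smooth, projective and irreducible (the blow-up of the irreducible $X \times \PP$ along the smooth irreducible projective bundle $\mathcal{Z}$ over $X$), we have $\mathrm{Hom}(\det\Gamma_2,\det \Gamma_1) \simeq \mathrm{H}^0(B,\mathcal{O}_B) = \mathbb{C}$, so the natural determinant map is either zero or an isomorphism. To exclude the former, pick a point $b \in B \setminus D$ with $q'(b)$ outside the support in $\PP$ of the cokernel of the injection $\widetilde W \hookrightarrow W$ from Lemma \ref{inj-lemma} (this cokernel is necessarily torsion because both bundles have the same rank and the map is injective). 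The defining sequences give $\Gamma_1|_b \simeq W|_{q'(b)}$ and $\Gamma_2|_b \simeq \widetilde W|_{q'(b)}$, and the induced map is the fiberwise isomorphism $\widetilde W|_{q'(b)} \xrightarrow{\sim} W|_{q'(b)}$, so $\det \Gamma_2 \to \det \Gamma_1$ is nonzero and hence an isomorphism; tensoring with ${p'}^*L'$ and taking $\mathrm{H}^1$ makes $g$ an isomorphism. The main obstacle I anticipate is this nonvanishing step: the determinant computation is routine, but verifying that the natural map on determinants restricts to a genuine fiberwise isomorphism at a general point --- rather than being killed by a subtle cancellation or Tor contribution coming from the pullback ${q'}^*(\widetilde W \to W)$ --- requires careful tracking through the diagram defining $\Gamma_i$.
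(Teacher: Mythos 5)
Your proposal is correct and follows essentially the same route as the paper: both compute $\det q_*(p^*L'\otimes I_{\mathcal{Z}})\simeq\det q_*(p^*L\otimes I_{\mathcal{Z}})\simeq\mathcal{O}_{\PP}(-k-1)$ by pushing forward the two Koszul resolutions, deduce $\det\Gamma_i\simeq {q'}^*\mathcal{O}_{\PP}(k+1)(-D)$ from the defining sequences, and then conclude that the natural map $\bigwedge^{k+1}\Gamma_2\to\bigwedge^{k+1}\Gamma_1$ between these isomorphic line bundles is nonzero (via the injectivity of $\widetilde W\to W$ from Lemma~\ref{inj-lemma}) and hence an isomorphism. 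Your general-point fiber check and the paper's injectivity-of-the-composite argument are two phrasings of the same nondegeneracy step.
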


\begin{proof}
The natural map $\widetilde{W} \to W$ is injective by Lemma \ref{inj-lemma}. We now prove  $\det \widetilde{W} \simeq \det W$.  Since $q_*p^*L$ and $q_*p^*L'$ are trivial vector bundles, it is equivalent to prove $\det q_*(p^*L \otimes I_{\mathcal{Z}}) \simeq \det q_*(p^*L' \otimes I_{\mathcal{Z}})$, and then  $\det \widetilde{W} \simeq \det W \simeq \det q_*(p^*L \otimes I_{\mathcal{Z}})^{-1}$. We have the exact sequence 
$0 \to \mathcal{O}_X \boxtimes \mathcal{O}_{\PP}(-2) \xrightarrow{\id} E \boxtimes \mathcal{O}_{\PP}(-1) \to p^*L'\otimes I_{\mathcal{Z}} \to 0$, 
and applying $q_*$, we get an exact sequence
$$
0 \lra \mathcal{O}_{\PP}(-2) \lra \mathrm{H}^0(E) \otimes \mathcal{O}_{\PP}(-1) \lra q_*(p^*L'\otimes I_{\mathcal{Z}}) \lra 0.
$$

Since $h^0(E)=k+3$, this gives $\det q_*(p^*L'\otimes I_{\mathcal{Z}}) \simeq \mathcal{O}_{\PP}(-k-1)$. Next, from the exact sequence
$$
0 \lra \mathcal{O}_X(-\Delta) \boxtimes \mathcal{O}_{\PP}(-2) \overset{\id}\lra E(-\Delta)  \boxtimes \mathcal{O}_{\PP}(-1) \lra p^*L'(-\Delta) \otimes I_{\mathcal{Z}} \lra 0
$$
and $h^0(E(-\Delta))=k+1$, see \cite[Lemma 10]{kemeny-voisin}, we get
$$
\det q_*(p^*L \otimes I_{\mathcal{Z}})\simeq \det \mathrm{H}^0(E(-\Delta)) \otimes \mathcal{O}_{\PP}(-1) \simeq \mathcal{O}_{\PP}(-k-1),
$$
as required. From the defining sequences for $\Gamma_i$, $i=1,2$, we see $\det \Gamma_i \simeq \det {q'}^*W (-D)\simeq  {q'}^*\mathcal{O}_{\PP}(k+1)(-D)$. So,
$\bigwedge^{k+1}{\Gamma_2}  \simeq q'^* \det \widetilde{W} (-D) \simeq q'^* \det W(-D) \simeq \bigwedge^{k+1}{\Gamma_1}$. The natural injective map
$$
\bigwedge^{k+1} {\Gamma_2} \longhookrightarrow \bigwedge^{k+1} {q'}^* \widetilde{W} \longhookrightarrow \bigwedge^{k+1} W,
$$
which lands in $\bigwedge^{k+1} {\Gamma_1}$, is an injective map $\bigwedge^{k+1} {\Gamma_2} \hookrightarrow \bigwedge^{k+1} {\Gamma_1}$ between isomorphic line bundles on a projective space and hence must be an isomorphism, as required. 
\end{proof}

We now prove the second of the three needed statements. 

\begin{lem} \label{h}
The map $h\colon \mathrm{H}^1(B, \pi^*(\bigwedge^{k+1}\mathcal{M}_{L'}(p^*L'))  \to \mathrm{H}^1(B, \bigwedge^{k+1}\Gamma_1({p'}^*L'))$ is an isomorphism. 
\end{lem}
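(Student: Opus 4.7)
My plan is to factor $h$ as a composition $\beta \circ \alpha$ through an intermediate sheaf $N$ on $B$, and then show each factor is an isomorphism by cohomological vanishings obtained by pushing down the blow-up $\pi\colon B \to X \times \PP$ and applying the Koszul resolution of $I_{\mathcal{Z}}$ to reduce to Bott vanishing on $\PP$. Specifically, let $N \seq \mathrm{H}^0(L') \otimes \mathcal{O}_B$ be the preimage of $\Gamma_1 \seq {q'}^*W$ under the surjection $\mathrm{H}^0(L') \otimes \mathcal{O}_B \twoheadrightarrow {q'}^*W$; equivalently, since the map $\mathrm{H}^0(L') \otimes \mathcal{O}_B \to {p'}^*L'|_D$ factors through ${q'}^*W$, $N$ is the preimage of ${p'}^*L'(-D) \seq {p'}^*L'$ under the evaluation. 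One checks that $N$ is a rank $2k+3$ vector bundle on $B$ sitting in two short exact sequences
\begin{align*}
0 \lra \pi^*\mathcal{M}_{L'} \lra N &\lra {p'}^*L'(-D) \lra 0, \\
0 \lra {q'}^*{q'}_*\left({p'}^*L' \otimes I_D\right) \lra N &\lra \Gamma_1 \lra 0,
\end{align*}
and the composition $\pi^*\mathcal{M}_{L'} \hookrightarrow N \twoheadrightarrow \Gamma_1$ is the map inducing $h$. Thus $h = \beta \circ \alpha$, where $\alpha$ and $\beta$ are the maps on $\mathrm{H}^1(\cdot \otimes {p'}^*L')$ obtained by applying $\bigwedge^{k+1}$ to the two sequences above.

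For $\alpha$, the first sequence gives $0 \to \bigwedge^{k+1}\pi^*\mathcal{M}_{L'} \to \bigwedge^{k+1}N \to \bigwedge^{k}\pi^*\mathcal{M}_{L'} \otimes {p'}^*L'(-D) \to 0$, so $\alpha$ will be an isomorphism on all $\mathrm{H}^i$ provided $\mathrm{H}^i(B, \bigwedge^k \pi^*\mathcal{M}_{L'} \otimes {p'}^*L'^{\otimes 2}(-D)) = 0$. Pushing down via $\pi$ (using $\pi_*\mathcal{O}_B(-D) \simeq I_{\mathcal{Z}}$ and $R^{>0}\pi_*\mathcal{O}_B(-D) = 0$, since $\mathcal{Z}$ is a codimension two local complete intersection), this becomes $\mathrm{H}^i(X \times \PP, p^*(\bigwedge^k M_{L'}(2L')) \otimes I_{\mathcal{Z}})$. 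Tensoring the Koszul resolution of $I_{\mathcal{Z}}$ by $p^*(\bigwedge^k M_{L'}(L'))$ and applying K\"unneth, both outer terms twist $\mathcal{O}_{\PP}$ by $(-1)$ or $(-2)$, and $\mathrm{H}^\bullet(\PP^{k+2}, \mathcal{O}(-1)) = \mathrm{H}^\bullet(\PP^{k+2}, \mathcal{O}(-2)) = 0$ for $k \geq 4$, giving the vanishing instantly.

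For $\beta$, the second sequence induces a filtration on $\bigwedge^{k+1}N$ with associated graded pieces $\bigwedge^j {q'}^*{q'}_*({p'}^*L' \otimes I_D) \otimes \bigwedge^{k+1-j}\Gamma_1$ for $j = 0, \ldots, k+1$, and $\bigwedge^{k+1}\Gamma_1$ appears as the top quotient at $j = 0$. To obtain $\beta$ an isomorphism on $\mathrm{H}^1(\cdot \otimes {p'}^*L')$, it suffices to show $\mathrm{H}^i$ vanishes in degrees $i = 1, 2$ on each graded piece with $j \geq 1$ after twisting by ${p'}^*L'$. Since the first factor is pulled back from $\PP$, the projection formula along $q'$ reduces this to computing $Rq'_*(\bigwedge^{k+1-j}\Gamma_1 \otimes {p'}^*L')$ on $\PP$; iterating the defining sequence $0 \to \Gamma_1 \to {q'}^*W \to {p'}^*L'|_D \to 0$, these pushforwards are controlled by $Rq'_*({p'}^*L'^{\otimes r}|_D)$, which can be computed fiberwise since $D \to \mathcal{Z}$ is a $\PP^1$-bundle over the $\PP^k$-bundle $\mathcal{Z} \to X$. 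After a final Koszul reduction for $I_{\mathcal{Z}}$, every resulting term is a K\"unneth product of cohomology on $X$ (or $\mathcal{Z}$, $D$) with $\mathrm{H}^\bullet(\PP^{k+2}, \mathcal{O}(n))$ for $n \in [-k-2, -1]$, all of which vanish by Bott.

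The main obstacle is $\beta$: whereas $\alpha$ collapses after a single Koszul argument, the vanishings needed for $\beta$ demand iterating the filtration coming from $0 \to \Gamma_1 \to {q'}^*W \to {p'}^*L'|_D \to 0$ in parallel with the Koszul resolution of $I_{\mathcal{Z}}$, and organizing the bookkeeping uniformly across all $j \in \{1, \ldots, k+1\}$. The reason the argument ultimately succeeds is that at every stage of the reduction the $\PP^{k+2}$-factor contributes only line bundles whose cohomology vanishes in the Bott window.
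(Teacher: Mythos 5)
Your first step is fine: the bundle $N$ exists as described, the sequence $0 \to \pi^*\mathcal{M}_{L'} \to N \to {p'}^*L'(-D) \to 0$ has a line bundle quotient, and the required vanishing $\mathrm{H}^i\bigl(B, \bigwedge^k \pi^*\mathcal{M}_{L'} \otimes {p'}^*(L')^{\otimes 2}(-D)\bigr)=0$ does follow from $R\pi_*\mathcal{O}_B(-D)\simeq I_{\mathcal{Z}}$, the Koszul resolution of $p^*L'\otimes I_{\mathcal{Z}}$, and the vanishing of all cohomology of $\mathcal{O}_{\PP^{k+2}}(-1)$ and $\mathcal{O}_{\PP^{k+2}}(-2)$. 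So $\alpha$ is an isomorphism, and in particular $\mathrm{H}^1(B,\bigwedge^{k+1}N\otimes{p'}^*L')$ has dimension $\dim\mathrm{K}_{k,2}(X,L')$. The gap is in $\beta$. The target $\mathrm{H}^1(B,\bigwedge^{k+1}\Gamma_1({p'}^*L'))$ is computed in the paper to be $\Sym^{k-1}\mathrm{H}^0(X,E)^{\vee}$, of dimension $\binom{2k+1}{k-1}$, whereas $\dim\mathrm{K}_{k,2}(X,L')\geq\binom{2k+1}{k-1}$ with equality if and only if $\mathrm{K}_{k-1,2}(X,L')=0$. Hence your claimed vanishings $\mathrm{H}^1=\mathrm{H}^2=0$ on the graded pieces $\bigwedge^j{q'}^*\mathcal{G}\otimes\bigwedge^{k+1-j}\Gamma_1\otimes{p'}^*L'$ for $j\geq 1$ would, together with the correct statement about $\alpha$, \emph{imply} $\mathrm{K}_{k-1,2}(X,L')=0$, i.e.\ Green's conjecture for the genus-$(2k+2)$ polarization $L'$. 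That is a genuinely deep statement; it cannot drop out of K\"unneth and the Bott window on $\PP^{k+2}$. Concretely, the injectivity half of $\beta$ (the vanishing of the image of $\mathrm{H}^1$ of the sub-object of $\bigwedge^{k+1}N$) is exactly where this nonformal content sits, and at least one of your graded-piece vanishings must fail or resist the proposed reduction whenever $\mathrm{K}_{k-1,2}(X,L')\neq 0$ would otherwise be possible.

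Two further points. First, even the surjectivity-flavored part of the argument is not formal: the paper establishes the analogous vanishings $\mathrm{H}^{1+i}(\Sym^i S_1\otimes\bigwedge^{k+1-i}\pi^*\mathcal{M}_{L'}({p'}^*L'))=0$ only by reducing to the proof of Theorem~5 of \cite{kemeny-voisin}, which is the technical heart of that paper, not a Bott-type computation. Second, your bookkeeping for $\beta$ has technical problems of its own: the quotient in $0\to\Gamma_1\to{q'}^*W\to{p'}^*L'_{|_D}\to 0$ is a torsion sheaf supported on $D$, so exterior powers of this sequence do not produce the clean Koszul-type filtration you are iterating; and $q'_{|_D}$ is not finite (it has a one-dimensional fiber over $\PP(\mathrm{H}^0(X,E(-\Delta)))$), so ``fiberwise'' computations of $Rq'_*$ via Grauert are not justified. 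The paper's actual route is two-pronged: it proves equality of dimensions of source and target of $h$ by establishing $\mathrm{K}_{k-1,2}(X,L')=0$ through the nodal K3 $\hat{X}$ and the method of \cite[Section~1]{kemeny-voisin}, and then proves surjectivity of $h$ via the $\Sym^iS_1$ complex and \cite[Theorem~5]{kemeny-voisin}. Any correct proof must supply an input of comparable strength at the point where you invoke Bott vanishing for $\beta$.
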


\begin{proof}
We follow \cite[Section~2]{kemeny-voisin}. From Lemma \ref{first-step}, $\bigwedge^{k+1}\Gamma_1({p'}^*L') \simeq {q'}^*\mathcal{O}_{\PP}(k+1)) \otimes {p'}^*L' \otimes I_D$.
As in \cite[Corollary 2]{kemeny-voisin},\footnote{Note that $k$ has become $k+1$ as a smooth curve in $|L'|$ has genus $2k+2$} we compute
$$
\mathrm{H}^1\left(B, \bigwedge^{k+1}\Gamma_1\left({p'}^*L'\right)\right) \simeq \mathrm{H}^1\left(X \times \PP, \left(L' \boxtimes \mathcal{O}_{\PP}(k+1)\right) \otimes I_{\mathcal{Z}}\right)
$$
using the sequence
$$
0 \lra \mathcal{O}_X \boxtimes \mathcal{O}_{\PP}(-2) \overset{\id}\lra E \boxtimes \mathcal{O}_{\PP}(-1) \lra p^*L'\otimes I_{\mathcal{Z}} \lra 0.
$$
So $\mathrm{H}^1(B, \bigwedge^{k+1}\Gamma_1({p'}^*L')) \simeq \Sym^{k-1} \mathrm{H}^0(X,E)^{\vee}$.
Now 
$$
h^1\left(\pi^*\left(\bigwedge^{k+1}\mathcal{M}_{L'}(p^*L')\right)\right)=h^1\left(X \times \PP, \bigwedge^{k+1}\mathcal{M}_{L'}(p^*L')\right)= h^1\left(  \bigwedge^{k+1}M_{L'}(L')\right)=\dim  \mathrm{K}_{k,2}(X,L').
$$
Apply verbatim the method of \cite[Section~1]{kemeny-voisin} to prove Green's conjecture for the Gorenstein surface $\hat{X}$:  
$$
\mathrm{K}_{k-1,2}\left(\hat{X}, \hat{L}\right) \simeq \mathrm{H}^1\left(\hat{X}, \bigwedge^k M_{\hat{L}}\left(\hat{L}\right)\right) \simeq  \mathrm{H}^1\left(X, \bigwedge^k M_{L'}(L')\right) \simeq \mathrm{K}_{k-1,2}(X, L')=0.
$$
This vanishing implies $\dim \mathrm{K}_{k,2}(X,L') = \dim  \Sym^{k-1} \mathrm{H}^0(X,E)^{\vee}$; see \cite[Section~4.1]{farkas-progress}. So we have $h^1(\pi^*(\bigwedge^{k+1}\mathcal{M}_{L'}(p^*L'))=h^1( \bigwedge^{k+1}\Gamma_1({p'}^*L'))$. To prove that $h$ is an isomorphism, it suffices to prove it is surjective. From the sequence
$0 \to S_1 \to \pi^* \mathcal{M}_{L'} \to \Gamma_1 \to 0$
of vector bundles on $B$, we obtain the exact sequence
$$
\cdots \lra S_1 \otimes \bigwedge^k \pi^* \mathcal{M}_{L'} \left({p'}^*L'\right) \lra  \bigwedge^{k+1} \pi^* \mathcal{M}_{L'} \left({p'}^*L'\right) \lra \bigwedge^{k+1} \Gamma_1 \left({p'}^*L'\right) \lra 0. 
$$
It suffices to show
$\mathrm{H}^{1+i}(\Sym^i (S_1) \otimes \bigwedge^{k+1-i} \pi^* \mathcal{M}_{L'} ({p'}^*L') )=0$ for $1 \leq i \leq k+1$. As in \cite[Theorem 5]{kemeny-voisin}, we have  the exact sequence
$$
0 \lra S_1 \lra {q'}^*{q'}_*\left({p'}^*L' \otimes I_D\right) \lra {p'}^*L' \otimes I_D \lra 0
$$
and define
$$
\mathcal{G}:={q'}_*\left({p'}^*L' \otimes I_D\right)\simeq q_*\left(p^*L'\otimes I_{\mathcal{Z}}\right)\simeq i^* \hat{q}_*\left(\hat{p}^*\hat{L}\otimes I_{\hat{\mathcal{Z}}}\right) \simeq i^* \hat{\mathcal{G}} 
$$
for  $\hat{\mathcal{G}}:= \hat{q}_*(\hat{p}^*\hat{L}\otimes I_{\hat{\mathcal{Z}}})$. By Grauert's theorem  and the exact sequence $0 \to \mathcal{O}_{\hat{X}} \xrightarrow{s} \hat{E} \to \hat{L} \otimes I_{Z(s)} \to 0$
for any $s \neq 0 \in \mathrm{H}^0(\hat{E})$, we see $\hat{\mathcal{G}}$ and $\mathcal{G}$ are locally free of rank $k+2$. Now the defining sequence for $S_1$ 
gives the exact sequence
$$
0 \lra \Sym^i S_1 \lra   \Sym^i {q'}^* \mathcal{G} \lra  \Sym^{i-1} {q'}^* \mathcal{G}\otimes {p'}^*{L'} \otimes I_D \lra 0
$$
of bundles. It suffices that
\[\mathrm{H}^{1+i}\left(\Sym^i q^* \mathcal{G} \otimes \bigwedge^{k+1-i} \mathcal{M}_{L'} \left(p^*L'\right) \right)=\mathrm{H}^{i}\left(\Sym^{i-1} q^* \mathcal{G} \otimes \bigwedge^{k+1-i} \pi^* \mathcal{M}_{L'} \left(p^*{L'}^2\otimes I_{\mathcal{Z}}\right)\right)=0\]
for $1 \leq i \leq k+1$, or
\[
\mathrm{H}^{1+i}\left(\Sym^i \hat{q}^* \hat{\mathcal{G}} \otimes \bigwedge^{k+1-i} \mathcal{M}_{\hat{L}} \left(\hat{p}^*\hat{L}\right) \right)=\mathrm{H}^{i}\left(\Sym^{i-1} \hat{q}^* \hat{\mathcal{G}} \otimes \bigwedge^{k+1-i} \pi^*  \mathcal{M}_{\hat{L}} \left(\hat{p}^*{\hat{L}}^2\right)\otimes I_{\hat{\mathcal{Z}}}\right)=0\]
for $1 \leq i \leq k+1$. For this, as in \cite[Section~3]{kemeny-voisin}, we can use verbatim the proof of \cite[Theorem~5]{kemeny-voisin}. 
\end{proof}

We can now  prove the main theorem of this paper.

\begin{thm}
Let $X$ denote a K3 surface of Picard rank $2$, generated by a big and nef line bundle $L'$ with $(L')^2=4k+2$, together with the class of a smooth rational curve $\Delta$ with $(L' \cdot \Delta)=0$ for $k\geq 4$, as above. Define the line bundle $L:=L'(-\Delta)$. Then $\mathrm{K}_{k-1,2}(X,L)=0$.
\end{thm}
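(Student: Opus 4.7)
The plan is to assemble the ingredients already established in the excerpt. By the kernel bundle reduction recalled at the start of Section~2, we have $\mathrm{K}_{k-1,2}(X,L)=0 \iff \mathrm{H}^1(X,\bigwedge^k M_L(L)) = 0$. Using the short exact sequence $0 \to \bigwedge^{k+1}M_L(L') \to \bigwedge^{k+1}M_{L'}(L') \to \bigwedge^k M_L(L) \to 0$ together with the duality-based vanishing $\mathrm{H}^2(\bigwedge^{k+1}M_L(L'))=0$ already noted, the problem becomes surjectivity of the natural map
$$\alpha\colon \mathrm{H}^1\bigl(X, \textstyle\bigwedge^{k+1}M_L(L')\bigr) \lra \mathrm{H}^1\bigl(X, \textstyle\bigwedge^{k+1}M_{L'}(L')\bigr)$$
induced from the inclusion $M_L \hookrightarrow M_{L'}$.

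Since $\pi\colon B \to X\times\PP$ is the blow-up of a smooth codimension-two lci, we have $R\pi_*\mathcal{O}_B\simeq\mathcal{O}_{X\times\PP}$, so the cohomology of $\pi^*(\bigwedge^{k+1}\mathcal{M}_L(p^*L'))$ on $B$ agrees with that of $\bigwedge^{k+1}\mathcal{M}_L(p^*L')$ on $X\times\PP$, which by Künneth collapses to $\mathrm{H}^1(X,\bigwedge^{k+1}M_L(L'))$ (and similarly with $L$ replaced by $L'$). Hence the map $\alpha$ is identified with the left vertical arrow of the commutative square displayed just before Lemma~\ref{inj-lemma}. By Lemma~\ref{first-step} the vertical map $g$ is an isomorphism, and by Lemma~\ref{h} the horizontal map $h$ is an isomorphism. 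Commutativity then forces $\alpha$ to be surjective precisely when the remaining map $f\colon \mathrm{H}^1(B,\pi^*(\bigwedge^{k+1}\mathcal{M}_L(p^*L'))) \to \mathrm{H}^1(B,\bigwedge^{k+1}\Gamma_2({p'}^*L'))$ is surjective.

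Thus the third step of the proof is to show $f$ is surjective, by the same template as Lemma~\ref{h}. From $0 \to S_2 \to \pi^*\mathcal{M}_L \to \Gamma_2 \to 0$, an Eagon--Northcott-style resolution of $\bigwedge^{k+1}\Gamma_2$ by terms $\mathrm{Sym}^i(S_2)\otimes\bigwedge^{k+1-i}\pi^*\mathcal{M}_L$ reduces the surjectivity of $f$ to the vanishings
$$\mathrm{H}^{1+i}\bigl(\mathrm{Sym}^i(S_2) \otimes \textstyle\bigwedge^{k+1-i}\pi^*\mathcal{M}_L({p'}^*L')\bigr)=0, \quad 1\leq i\leq k+1.$$
Using the defining exact sequence $0 \to S_2 \to {q'}^*\widetilde{\mathcal{G}} \to {p'}^*L\otimes I_D \to 0$ with $\widetilde{\mathcal{G}}:=q_*(p^*L\otimes I_{\mathcal{Z}})$ (locally free of rank $k+2$ by the argument of Proposition~\ref{W}), one replaces $\mathrm{Sym}^i(S_2)$ by a two-term complex involving $\mathrm{Sym}^j{q'}^*\widetilde{\mathcal{G}}$ and reduces further. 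Pushing down through $\tau$ via Lemma~\ref{Z-lemma} translates the required vanishings into statements on $\hat{X}\times\PP_2$ involving $\hat{p}^*I_v$ (accounting for the contraction of $\Delta$) and $\hat{\widetilde{\mathcal{G}}}$; these are then established by the verbatim argument of \cite[Theorem~5]{kemeny-voisin}, applied to $\hat{L}$ with the extra twist by $I_v$.

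The main obstacle is the last step. Although the resolution-and-Koszul strategy for surjectivity of $f$ runs in parallel to the proof of Lemma~\ref{h}, one must check carefully that the defining sequence for $S_2$ (built from $L$ rather than $L'$) and the additional node-ideal twists $\hat{p}^*I_v$ behave compatibly with push-forward via $\tau$; this is where the Cohen--Macaulay property of $\hat{\mathcal{Z}}$, the identities of Lemma~\ref{Z-lemma}, and miracle flatness all play essential roles.
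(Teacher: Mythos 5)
Your reduction to the surjectivity of $f$ coincides with the paper's: the commutative square with $g$ and $h$ isomorphisms (Lemmas \ref{first-step} and \ref{h}), the Eagon--Northcott-type resolution coming from $0 \to S_2 \to \pi^*\mathcal{M}_L \to \Gamma_2 \to 0$, and the further reduction via $0 \to \Sym^i S_2 \to \Sym^i {q'}^*\mathcal{H} \to \Sym^{i-1}{q'}^*\mathcal{H}\otimes {p'}^*L'\otimes I_D \to 0$, where $\mathcal{H}:={q'}_*({p'}^*L\otimes I_D)\simeq q_*(p^*L\otimes I_{\mathcal{Z}})$. Up to that point you match the paper (one small slip: this sheaf has rank $k+1=h^0(E(-\Delta))$, not $k+2$; the rank $k+2$ sheaf is $\mathcal{G}=q_*(p^*L'\otimes I_{\mathcal{Z}})$).

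The gap is in how you finish. You propose to push the required vanishings down through $\tau$ to $\hat{X}\times\PP_2$, twist by $\hat{p}^*I_v$, and run ``the verbatim argument of \cite[Theorem~5]{kemeny-voisin}'' --- and you yourself flag this as an unresolved obstacle. As stated this is not a proof: that argument is calibrated to $\hat{L}$ and $\hat{\mathcal{G}}$, and whether it survives the extra $I_v$ twist is precisely what would have to be checked. More importantly, it misses the point that this last step is \emph{elementary} and takes place entirely on $X\times\PP$, with no contraction and no $I_v$. Applying $q_*$ to the twisted Koszul sequence $0 \to \mathcal{O}_X(-\Delta)\boxtimes\mathcal{O}_{\PP}(-2) \to E(-\Delta)\boxtimes\mathcal{O}_{\PP}(-1) \to p^*L\otimes I_{\mathcal{Z}} \to 0$ and using $h^0(\mathcal{O}_X(-\Delta))=h^1(\mathcal{O}_X(-\Delta))=0$ gives $\mathcal{H}\simeq \mathrm{H}^0(X,E(-\Delta))\otimes\mathcal{O}_{\PP}(-1)$, a direct sum of copies of $\mathcal{O}_{\PP}(-1)$ --- in contrast to the $L'$ case, where $h^0(\mathcal{O}_X)=1$ forces $\mathcal{G}$ to be a nontrivial quotient of $\mathrm{H}^0(E)\otimes\mathcal{O}_{\PP}(-1)$ by $\mathcal{O}_{\PP}(-2)$ and genuinely requires the machinery of Lemma \ref{h}. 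Consequently $\Sym^i{q'}^*\mathcal{H}$ is a direct sum of copies of ${q'}^*\mathcal{O}_{\PP}(-i)$, and the required vanishings reduce to $\mathrm{H}^{1+i}\bigl(X\times\PP^{k+2}, \bigwedge^{k+1-i}M_L(L')\boxtimes\mathcal{O}_{\PP}(-i)\bigr)=0$ together with a companion statement obtained by resolving $I_{\mathcal{Z}}$ once more with the Koszul sequence; all of these are immediate from the K\"unneth formula, since $\mathcal{O}_{\PP^{k+2}}(-j)$ has no cohomology for $1\le j\le k+2$. Without this observation your proof is incomplete at exactly the step that carries the content of the theorem.
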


\begin{proof}
It remains to show $f \colon \mathrm{H}^1(B, \pi^*(\bigwedge^{k+1}\mathcal{M}_{L}(p^*L'))  \to \mathrm{H}^1(B, \bigwedge^{k+1}\Gamma_2({p'}^*L'))$
is surjective. 
From the exact sequence $0 \to S_2 \to \pi^* \mathcal{M}_{L} \to \Gamma_2 \to 0$,
we obtain the exact sequence
$$
\cdots \lra S_2 \otimes \bigwedge^k \pi^* \mathcal{M}_{L} \left({p'}^*L'\right) \lra  \bigwedge^{k+1} \pi^* \mathcal{M}_{L} \left({p'}^*L'\right) \lra \bigwedge^{k+1} \Gamma_2 \left({p'}^*L'\right) \lra 0.
$$
It suffices to show 
$\mathrm{H}^{1+i}(\Sym^i (S_2) \otimes \bigwedge^{k+1-i} \pi^* \mathcal{M}_{L} ({p'}^*L') )=0$ for $1 \leq i \leq k+1$.

Now, we have the exact sequence
$0 \to S_2 \to {q'}^*{q'}_*({p'}^*L \otimes I_D) \to {p'}^*L \otimes I_D \to 0$. Define
$\mathcal{H}:={q'}_*({p'}^*L \otimes I_D)\simeq q_*(p^*L\otimes I_{\mathcal{Z}})$. From the exact sequence
$0 \to \mathcal{O}_X(-\Delta) \boxtimes \mathcal{O}_{\PP}(-2) \xrightarrow{\id} E(-\Delta)  \boxtimes \mathcal{O}_{\PP}(-1) \to p^*L \otimes I_{\mathcal{Z}} \to 0, $
by applying $q_*$, we get $\mathcal{H}\simeq \mathrm{H}^0(X,E(-\Delta)) \otimes \mathcal{O}_{\PP}(-1)$. We have the exact sequence
$ 0 \to \Sym^i S_2 \to   \Sym^i {q'}^* \mathcal{H} \to  \Sym^{i-1} {q'}^* \mathcal{H}\otimes {p'}^*{L'} \otimes I_D \to 0 $.
It suffices to have 
$$
\mathrm{H}^{1+i}\left(\Sym^i {q'}^* \mathcal{H} \otimes \bigwedge^{k+1-i} \pi^* \mathcal{M}_{L} \left(p^*L'\right) \right)=\mathrm{H}^{i}\left(\Sym^{i-1} {q'}^* \mathcal{H} \otimes \bigwedge^{k+1-i} \pi^* \mathcal{M}_{L} \left(p^*{L'}^2\otimes I_{\mathcal{Z}}\right)\right)=0 
$$
for $1 \leq i \leq k+1$. 
So, it suffices to have
$$
\mathrm{H}^{1+i}\left(X \times \PP, \bigwedge^{k+1-i}M_L(L') \boxtimes \mathcal{O}_{\PP}(-i)\right)=\mathrm{H}^{i}\left(X \times \PP, \left(\bigwedge^{k+1-i}M_L\left({L'}^2\right) \boxtimes \mathcal{O}_{\PP}(-i+1)\right)\otimes I_{\mathcal{Z}}\right)=0
$$
for $1 \leq i \leq k+1$, where $\PP \simeq \PP^{k+2}$. The first vanishing is immediate by the K\"unneth formula. For the second, by the above exact sequence, it is enough to have
$$
\mathrm{H}^{i}\left(X \times \PP^{k+2}, \bigwedge^{k+1-i}M_L({L'}\otimes E) \boxtimes \mathcal{O}_{\PP}(-i)\right)=\mathrm{H}^{i+1}\left(X \times \PP^{k+2}, \bigwedge^{k+1-i}M_L({L'} )\boxtimes \mathcal{O}_{\PP}(-i-1)\right)=0 
$$
for $1 \leq i \leq k+1$, which is again immediate by the K\"unneth formula.
\end{proof}


\newcommand{\etalchar}[1]{$^{#1}$}

\end{document}